\newtheorem{Th}{Theorem}[section]
\newtheorem{Cor}[Th]{Corollary}
\newtheorem{Lem}[Th]{Lemma}
\newtheorem{Prop}[Th]{Proposition}
\newtheorem{claim-num}{Claim}
\newtheorem*{theo}{Theorem}
\newtheorem*{defn}{Definition}
\newtheorem*{rem}{Remark}
\def\a{\alpha}
\def\eps{\varepsilon}
\def\vk{\varkappa}
\def\f{\varphi}
\def\s{\sigma}
\def\N{\mathbf N}
\def\aut#1{\mathrm{Aut}(#1)}
\def\str#1{\langle#1\rangle}
\def\sym#1{\mathrm{Sym}(#1)}
\def\inv{{}^{-1}}
\def\sle{\subseteq}
\def\Mod#1{\ (\mathrm{mod}\ #1)}
\def\id{\mathrm{id}}
\renewcommand{\le}{\leqslant}
\renewcommand{\ge}{\geqslant}
\def\To{\Rightarrow}
\def\rank{\operatorname{rank} }
\def\avst#1{ \overline{\mathstrut #1} }
\def\symf#1{\mathrm{Sym}_F(#1)}
\def\mobf{\mathrm{Mo}_\cB(F)}
\def\vk{\varkappa}
\def\cM{\mathcal M}
\def\cB{\mathscr B}
\def\cC{\mathscr C}
\def\cD{\mathscr D}
\def\cE{\mathscr E}
\def\cF{\mathscr F}
\def\cG{\mathscr G}
\numberwithin{equation}{section}
\begin{document}

\begin{abstract}
Let $F$ be a relatively free algebra of infinite rank
$\vk.$ We say that $F$ has the {\it small index property}
if any subgroup of $\Gamma=\aut F$ of index at most $\vk$ contains
the pointwise stabilizer $\Gamma_{(U)}$ of a subset
$U$ of $F$ of cardinality less than $\vk.$ We prove that
every infinitely generated free nilpotent/abelian
group has the small index property, and discuss
a number of applications.
\end{abstract}

\title[Free nilpotent groups]
{The small index property for free nilpotent groups}
\author{Vladimir Tolstykh}
\address{Vladimir Tolstykh\\ Department of Mathematics\\ Yeditepe University\\
34755 Kay\i\c sda\u g\i \\
Istanbul\\
Turkey}
\email{vtolstykh@yeditepe.edu.tr}
\subjclass[2010]{20F28 (20E05, 20F19)}
\maketitle


\section{Introduction}

A countable first-order structure $\cM$ is said to have the {\it small index
property} if every subgroup of the automorphism group $\Gamma=\aut\cM$
of index less than $2^{\aleph_0}$ contains the pointwise
stabilizer $\Gamma_{(U)}$ of a finite subset $U$ of the
domain of $\cM$ \cite[Section 4.2]{Hodges}.
Here we use and shall use in the rest of the paper
the standard notation of the theory of permutation groups. Working with a group $G$
which acts on set $X,$ we shall denote by $G_{(Y)}$
and by $G_{\{Y\}}$ the pointwise and the setwise
stabilizer of a subset $Y$ of $X$ in $G,$ respectively. Any symbol
of the form $G_{*_1,*_2}$ will denote the intersection
of stabilizers $G_{*_1}, G_{*_2} \le G$ (e.g.
$G_{(Y),\{Z\}}=G_{(Y)} \cap G_{\{Z\}}).$

There are many
examples of countable structures that have the small index property:
any countable set with no relations; the set of rational
numbers with the natural order; any countable
atomless boolean algebra, any vector space of countable
dimension over an at most countable field, etc. The special
role played by the cardinal $\aleph_0$ is (essentially)
explained by the fact that if $\cM$ is a first-order structure
of cardinality $\aleph_0,$ then $\aut \cM$ can be naturally
converted into a Polish topological group \cite[pp. 59--60]{Kech} (recall
that a topological space is Polish if and only
if it is separable and completely metrizable).

One of the attractions of the small index property
lies in the fact that if for a countable structure
$\cM$ we have that $|\aut \cM|=2^{\aleph_0},$
then any automorphism $\Delta \in \aut \Gamma$ of the group $\Gamma=\aut \cM$ takes
a subgroup of small ($< 2^{\aleph_0})$ index to
a subgroup of small index, since the condition
``$\Sigma$ is of small index in $\Gamma$'' and
``$|\Gamma : \Sigma| < |\Gamma|$'' are in this case equivalent.
This makes the small index property an efficient
tool to prove results on the isomorphism types
of the automorphism groups of countable structures,
to prove results on reconstruction of countable structures from
their automorphism groups, etc. These considerations lead
to a search of a reasonable analogue of the small index
property for arbitrary infinite structures. One of the
crucial results in this direction is the following theorem
by J. Dixon, P.M. Neumann and S. Thomas which appeared
in one of the very first papers on the small index property \cite{DiNeuTho}.

\begin{theo}[Theorem $2^{\mbox{\normalsize$\flat$}}$ of \cite{DiNeuTho}]
Let $I$ be an infinite set. Then any subgroup of the
symmetric group $\Gamma=\sym I$ of $I$ of index at most $|I|$ contains
the pointwise stabilizer $\Gamma_{(U)}$ a subset $U$ of $I$ of cardinality
$< |I|.$
\end{theo}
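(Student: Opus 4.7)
Set $\kappa=|I|$ and let $\Sigma\le\Gamma=\sym I$ with $|\Gamma:\Sigma|\le\kappa$; the target is a subset $U\sle I$ with $|U|<\kappa$ satisfying $\Gamma_{(U)}\le\Sigma$. The driving intuition is that the coset space $\Gamma/\Sigma$ is a $\Gamma$-set of cardinality at most $\kappa$, much smaller than $|\Gamma|=2^\kappa$, so the elements which can escape $\Sigma$ must behave in a highly constrained way with respect to the underlying set $I$.

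My plan is to fix a partition $I=\bigsqcup_{\a<\kappa}I_\a$ into $\kappa$ blocks each of cardinality $\kappa$, and to write $\Gamma_\a\le\Gamma$ for the subgroup of permutations supported in $I_\a$. These pairwise commute and each satisfies $|\Gamma_\a:\Gamma_\a\cap\Sigma|\le\kappa$. The argument then divides into two stages. First, I would show that the set of ``bad'' indices $B=\{\a<\kappa:\Gamma_\a\not\le\Sigma\}$ has cardinality strictly less than $\kappa$: choosing involutions $\tau_\a\in\Gamma_\a\setminus\Sigma$ for $\a\in B$ yields a commuting family generating an elementary abelian $2$-group $H$, on which $|H:H\cap\Sigma|\le\kappa$ forces severe linear-algebraic constraints on the images of the $\tau_\a$ in $H/(H\cap\Sigma)$; combined with the non-abelian richness of the $\Gamma_\a$'s, this should push $|B|<\kappa$. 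Second, inside each bad block $I_\a$ one extracts a subset $U_\a\sle I_\a$ with $|U_\a|<\kappa$ such that $(\Gamma_\a)_{(U_\a)}\le\Sigma$.

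The desired set is then $U=\bigcup_{\a\in B}U_\a$, which has cardinality $<\kappa$ provided $\kappa$ is regular (the singular case requires a more careful choice of partition), and $\Gamma_{(U)}$ is generated by the good $\Gamma_\a$'s together with the local pointwise stabilizers $(\Gamma_\a)_{(U_\a)}$ for $\a\in B$ and suitable block-interchanging permutations that have to be verified to lie in $\Sigma$. The main obstacle is clearly the second stage: extracting $U_\a$ inside $\sym{I_\a}$ from an index-$\kappa$ constraint is essentially the theorem applied to $\sym{I_\a}$ with $|I_\a|=\kappa$, so a naive approach is circular. Overcoming this demands either a transfinite induction on $\kappa$ with the countable case $\kappa=\aleph_0$ (the classical small-index property of $\sym\N$, a highly nontrivial result in its own right) handled separately, or a global scheme that extracts $U$ uniformly from the $\Gamma$-action on $\Gamma/\Sigma$ without any per-block recursion. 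Careful combinatorial adjustments for singular $\kappa$, and the verification that block-interchanging permutations actually land in $\Sigma$, round out the remaining technicalities.
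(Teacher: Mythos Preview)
The paper does not prove this statement: it is quoted from \cite{DiNeuTho} and then used as a black box in part~(a) of the proof of Proposition~\ref{MStab-in-a-small-index-subgroup}. There is therefore no proof in the paper against which to compare your proposal.

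As for the proposal on its own terms, it is an outline with self-identified gaps, and those gaps are genuine. The circularity in the second stage cannot be removed by transfinite induction on $\kappa$: your blocks satisfy $|I_\a|=\kappa$, so appealing to the theorem for $\sym{I_\a}$ is appealing to it at the \emph{same} cardinal, and the inductive step never descends (granting the countable case as a base does not help). The first stage is also not established. If $H$ is literally the group generated by the involutions $\tau_\a$, then $|H|\le|B|\le\kappa$ and the bound $|H:H\cap\Sigma|\le\kappa$ is vacuous; if instead you pass to the full product $\prod_{\a\in B}\str{\tau_\a}$ inside $\Gamma$, the quotient by its intersection with $\Sigma$ is an elementary abelian $2$-group of cardinality at most $\kappa$, hence of rank at most $\kappa$, and the images of the $\kappa$ elements $\tau_\a$ may perfectly well be linearly independent there---so no contradiction emerges without the unspecified ``non-abelian richness'' step. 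Finally, even granting both stages, your generation claim for $\Gamma_{(U)}$ is not automatic: a permutation fixing $U$ pointwise can scatter each block $I_\a$ across all of $I$, and factoring it as a product of good $\Gamma_\a$'s, local stabilizers, and block-interchanges \emph{that provably lie in $\Sigma$} is itself a nontrivial task.
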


Note that if $I$ is of cardinality $\aleph_0,$ then for any
subgroup $\Sigma$ of $\Gamma=\sym I$ the conditions
``$|\Gamma:\Sigma| \le \aleph_0$'' and ``$|\Gamma:\Sigma| < 2^{\aleph_0}$''
are equivalent \cite[Th. 4.2.8]{Hodges}.

Working with relatively free algebras $F,$ one quite often
meets the situation when the automorphism group $\aut F$
``recognizes'' the rank of $F$ (the cardinality of a basis
of $F$). For instance (keeping in mind Theorem $2^{\mbox{\normalsize$\flat$}}$ of \cite{DiNeuTho}),
if $F$ is an infinite set with no relations (a relatively
free algebra in the empty language), then $\rank(F)=|F|$
is equal to the cardinality of the family of all transpositions
in $\aut F$ (the family of all transpositions can be
characterized in $\aut F$ in terms of group operation,
see e.g. \cite{McK}). Similarly, $\aut F$ ``recognizes''
$\rank(F)$ if $F$ is an infinite-dimensional vector space over a division
ring, an infinitely generated free nilpotent/abelian, or an infinitely
generated centerless relatively free group \cite[Prop. 1.3]{To_Smallness},
etc. Summarizing the above discussion, we suggest
the following definition.

\begin{defn}
Let $F$ be a relatively free algebra of infinite rank $\vk.$
We shall say that $F$ has the {\it small index property}
if any subgroup $\Sigma$ of $\Gamma=\aut F$ of index at most $\vk$ contains
the pointwise stabilizer $\Gamma_{(U)}$ of a subset
$U$ of $F$ of cardinality $< \vk.$
\end{defn}

Henceforth we shall use the term the small index
property only in the sense of the above definition
(which a priori differs from the classical
one for countable structures).

The paper is organized as follows. Proposition \ref{MStab-in-a-small-index-subgroup}
of Section 2 states that given a relatively free
algebra $F$ of infinite rank with a basis $\cB$
and any subgroup $\Sigma \le \Gamma=\aut F$ of small
$(\le \rank(F))$ index, there is a moiety $\cC$ of
$\cB$ such that $\Sigma$ contains the subgroup
$\Gamma_{(\cB \setminus \cC),\{\str{\cC}\}}$ of
all elements of $\Gamma$ which fix $\cB \setminus \cC$
pointwise and fix the subalgebra $\str{\cC}$
generated by $\cC$ setwise. The proof of Proposition
\ref{MStab-in-a-small-index-subgroup} relies heavily
on Theorem $2^{\mbox{\normalsize$\flat$}}$ of \cite{DiNeuTho}
we have quoted above.

Then we consider a number of corollaries of Proposition \ref{MStab-in-a-small-index-subgroup}
for free algebras of infinite rank in so-called BMN-varieties
introduced by the author in \cite{To_Berg}. A variety $\mathfrak V$
of algebras is called a {\it BMN-variety}, if given a free
algebra $F \in \mathfrak V$ of infinite rank and any
basis $\cB$ of $F,$ the group $\Gamma=\aut F$ is generated
by the stabilizers
$$
\Gamma_{(\cB_1),\{\str{\cB_2 \cup \cC}\}} \text{ and }
\Gamma_{(\cB_2),\{\str{\cB_1 \cup \cC}\}}
$$
where $\cB=\cB_1 \sqcup \cB_2 \sqcup \cC$ is any
partition of $\cB$ into moieties. Examples of BMN-varieties
are, for instance, the variety of algebras with no structure,
any variety of vector spaces over a fixed
division ring, any variety $\mathfrak N_c$ of nilpotent groups of
class $\le c,$ etc. By applying Proposition \ref{MStab-in-a-small-index-subgroup},
we prove that if $F$ is a free algebra of infinite
rank from a BMN-variety, then the group $\Gamma=\aut F$ has no
proper normal subgroups of index $< 2^{\rank(F)}$ and that
every subgroup $\Sigma$ of $\Gamma$ of small index contains
a stabilizer $\Gamma_{(\cD),\{\str{\cB \setminus \cD}\}}$
where $\cB$ is a basis of $F$ and $\cD$ is a subset
of $\cB$ of cardinality $< \rank(F).$ To compare
these results with the situation in the general
case, we show, using the ideas from the paper
\cite{BrRom} by R.~Bryant and V.~Roman'kov,
that if $F$ is a relatively free algebra of
infinite rank with the small index property,
then any proper normal subgroup of $\aut F$ is of index
$> \rank(F)$ and that the group $\aut F$ is perfect
(Proposition \ref{SmInd=>Perf}).

Then we consider a natural sufficient condition for any infinitely
generated free algebra from an BMN-variety to have the
small index property (Proposition \ref{Stab=<Shariki>})
and show that this condition is true for any free nilpotent/abelian
group of infinite
rank (Theorem \ref{SmIndProp4Nilps}); the said condition
is also true, for instance, for any infinite-dimensional
vector space over a division ring. Note that R.~Bryant and D.~Evans proved in \cite{BrEv} that
any free group of countably infinite rank has the small index
property and obtained as a corollary that some other
relatively free groups of countably infinite rank (in
particular, free nilpotent groups of
countably infinite rank) have the small index property.
Thus the result by R.~Bryant and D.~Evans concerning
free nilpotent groups of countably infinite
rank is transferred to free nilpotent groups
of arbitrary infinite rank.

\section{The small index property for relatively free algebras}

Let $F$ be a relatively free algebra with a basis $\cB.$
We call an automorphism $\pi$ of $F$ a {\it permutational}
automorphism with respect to $\cB,$ if $\pi$ preserves
$\cB$ as a set; the group of all $\cB$-permutational
automorphisms of $F$ will be denoted by $\symf\cB;$
clearly, $\symf\cB \cong \sym\cB.$

Recall that a subset $J$ of an infinite set $I$
is called a {\it moiety} of $I$ if $|J|=|I \setminus J|.$
Given an automorphism $\s$ of $F$ in the case
when $F$ is of infinite rank, we shall call $\s$ {\it moietous}
with regard to $\cB,$ if there is a partition
$\cB=\cB_1 \sqcup \cB_2$ of $\cB$ into moieties such that $\s \in \Gamma_{(\cB_1),\{\str{\cB_2}\}},$
that is, if $\s$ fixes $\cB_1$ pointwise and fixes the subalgebra
$\str{\cB_2}$ generated by $\cB_2$ setwise.
The set of all $\cB$-moietous automorphisms
of $F$ will be denoted by $\mathrm{Mo}_\cB(F).$

Now let
$$
\cB = \bigsqcup_{i \in I} \cB_i
$$
be a partition of $\cB.$ Having a situation like that we shall write
\begin{equation}
F = \circledast_{i \in I} F_i
\end{equation}
where $F_i =\str{\cB_i}$ is the subalgebra
of $F$ generated by $\cB_i$ $(i \in I);$
thus $F$ is the coproduct of subalgebras $F_i.$ Now given
automorphisms $\f_i \in \aut{F_i}$ where $i$ runs over $I,$
there is a uniquely determined automorphism
$\f \in \aut F$ such that $\f|_{F_i}=\f_i$
for all $i \in I;$ in the manner of (\theequation) we write
$$
\f = \circledast_{i \in I} \f_i.
$$
Suppose that for a certain subset $J \sle I$
and for a certain $j_0 \in J$ we have that
\begin{itemize}
\item $|\cB_j|=|\cB_{j_0}|$ for all $j \in J$ (so all
sets $\cB_j$ are equipotent);
\item for every $j \in J$ there is a bijection $p_j : \cB_{j_0} \to \cB_j$
such that $\f_j =  \pi_j \f_{j_0} \pi_j\inv$
where $\pi_j$ is the isomorphism of algebras in
the language of $F$ induced by $p_j$ (and then the actions of all $\f_j$ are isomorphic);
\end{itemize}
it is then convenient to write the automorphism
$\f = \circledast_{i \in I} \f_i$
in the following simplified form
$$
\f = (\circledast_{j \in J} \f_{j_0}) \circledast (\circledast_{i \in (I \setminus J)} \f_i).
$$
Further, let
$$
\cB =\cD \sqcup \bigsqcup_{j \in J} \cC_j \sqcup \bigsqcup_{i \in I} \cC_i.
$$
be a partition of $\cB.$ Suppose that for all $j \in J,$
$\rho_j$ is an automorphism of the subalgebra
$\str{\cD,\cC_j}$ which fixes $\cD$ pointwise and
that for all $i \in I,$ $\f_i$ is an automorphism
of the subalgebra $\str{\cC_i}.$ Then there exists a uniquely
determined automorphism $\s$ of $F$ such that
\begin{itemize}
\item $\s|_\cD = \id_\cD;$
\item $\s|_{\cC_j} = \rho_j|_{\cC_j}$ for all $j \in J;$
\item $\s|_{\cC_i} = \f_i|_{\cC_i}$ for all $i \in I$;
\end{itemize}
in the manner we have used above, $\s$ can be written as
$$
\s =\id \circledast (\circledast_{j \in J} \rho_j) \circledast
(\circledast_{i \in I} \f_i).
$$

\begin{Prop} \label{MStab-in-a-small-index-subgroup}
Let $F$ be a relatively free algebra of infinite
rank $\vk$ with a basis $\cB.$ Then for every
subgroup $\Sigma$ of $\Gamma=\aut F$ of index at most $\vk$ there is a subset
$\cD$ of $\cB$ of cardinality $< \vk$ such that
$$
\mathrm{Mo}_\cB(F)_{(\cD)} \sle \Sigma
$$
where $\mathrm{Mo}_\cB(F)_{(\cD)}$
is the set of all $\cB$-moietous automorphisms which fix
$\cD$ pointwise. In particular, if $\cC$
is any moiety of $\cB \setminus \cD,$ the stabilizer
$\Gamma_{(\cB \setminus \cC),\{ \str{\cC} \}}$
is contained in $\Sigma.$
\end{Prop}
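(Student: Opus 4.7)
My approach is to apply the Dixon--Neumann--Thomas theorem to the subgroup $\symf\cB\cong\sym\cB$ of $\Gamma$ to extract a ``permutational core'' inside $\Sigma$, and then to bootstrap this core to every moietous automorphism fixing a small subset of $\cB$ via an infinite-product shift--commutator identity. The embedding $\symf\cB\hookrightarrow\Gamma$ gives $[\symf\cB:\symf\cB\cap\Sigma]\le[\Gamma:\Sigma]\le\vk=|\cB|$, so Theorem $2^{\mbox{\normalsize$\flat$}}$ of \cite{DiNeuTho} furnishes a subset $\cD\sle\cB$ with $|\cD|<\vk$ such that $(\symf\cB)_{(\cD)}\sle\Sigma$; this $\cD$ will serve as the small subset required by the statement.

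Let $\sigma\in\mathrm{Mo}_\cB(F)_{(\cD)}$ with moiety partition $\cB=\cB_1\sqcup\cB_2$, $\sigma|_{\cB_1}=\id$ and $\sigma$ preserving $\str{\cB_2}$ setwise. After conjugation by a permutational automorphism in $(\symf\cB)_{(\cD)}\le\Sigma$ (which does not affect membership in $\Sigma$), I may assume $\cD\sle\cB_1$. I partition $\cB\setminus\cD=\bigsqcup_{n\in\Z}\cB^{(n)}$ into countably many moieties with $\cB^{(0)}=\cB_2$, fix bijections $p_n\colon\cB_2\to\cB^{(n)}$, and let $\sigma_n$ be the copy of $\sigma$ transported via $p_n$ and supported on $\cB^{(n)}$. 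Since the $\sigma_n$ have pairwise disjoint supports they commute, so $\tau=\circledast_{n\ge 0}\sigma_n$ is a well-defined automorphism lying in $\mathrm{Mo}_\cB(F)_{(\cD)}$. Taking the shift permutation $\pi\in(\symf\cB)_{(\cD)}$ induced by $p_{n+1}\circ p_n^{-1}$, a direct check gives $\pi\sigma_n\pi^{-1}=\sigma_{n+1}$ and hence $\pi\tau\pi^{-1}=\tau\sigma^{-1}$, which rearranges to the commutator identity
\[
\sigma=[\tau,\pi].
\]
Since $\pi\in\Sigma$ by the first step, placing $\sigma$ in $\Sigma$ reduces to placing $\tau$ there.

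The main obstacle is to establish $\tau\in\Sigma$ without triggering an infinite regress, since $\tau$ itself is an element of $\mathrm{Mo}_\cB(F)_{(\cD)}$. I expect the resolution to exploit the shift symmetry of the construction: the two-sided product $\widetilde\tau=\circledast_{n\in\Z}\sigma_n$ centralises $\pi$, while $\tau$ differs from $\widetilde\tau$ by the ``negative-tail'' factor $\circledast_{n<0}\sigma_n$, which lives on a disjoint moiety and is conjugate to $\tau$ by an element of $(\symf\cB)_{(\cD)}\le\Sigma$. Combining this symmetry with the cardinality bound $[\Gamma:\Sigma]\le\vk$ --- for instance via a pigeon-hole argument over $\vk^+$ many variant constructions of $\tau$, each yielding its own commutator identity $\sigma=[\tau_\alpha,\pi_\alpha]$ with $\pi_\alpha\in\Sigma$ --- should force some $\tau_\alpha\in\Sigma$ and hence $\sigma\in\Sigma$. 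The final assertion of the proposition is then immediate: for any moiety $\cC$ of $\cB\setminus\cD$, both $\cB\setminus\cC$ and $\cC$ have cardinality $\vk$, so every element of $\Gamma_{(\cB\setminus\cC),\{\str{\cC}\}}$ is $\cB$-moietous and fixes $\cD\sle\cB\setminus\cC$ pointwise, hence lies in $\mathrm{Mo}_\cB(F)_{(\cD)}\sle\Sigma$.
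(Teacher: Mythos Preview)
Your first step is correct and matches the paper: restricting to $\symf\cB$ and applying the Dixon--Neumann--Thomas theorem yields the small set $\cD$ with $(\symf\cB)_{(\cD)}\le\Sigma$. The shift/commutator identity $\sigma=[\tau,\pi]$ is also correct. But the argument breaks exactly where you say it does, and your proposed repair does not work. A pigeon-hole argument over $\vk^+$ (or even $2^\vk$) variants $\tau_\alpha$ only produces a \emph{ratio} $\tau_\alpha\tau_\beta^{-1}\in\Sigma$; it never forces some individual $\tau_\alpha$ into $\Sigma$. Rewriting $\sigma=[\tau_\alpha,\pi_\alpha]$ as ``$\sigma\in\Sigma\Leftrightarrow\tau_\alpha\pi_\alpha\tau_\alpha^{-1}\in\Sigma$'' and substituting $\tau_\beta=\tau_\alpha s$ with $s\in\Sigma$ just gives back an equivalent condition, so the coset coincidence carries no new information. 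In short, the commutator trick reduces $\sigma\in\Sigma$ to $\tau\in\Sigma$, which is a statement of the same type you are trying to prove; nothing you have written breaks that circularity. (There is also a quantitative mismatch: your partition of $\cB\setminus\cD$ is only countable, so it is unclear where the $\vk^+$ distinct variants would even come from when $\vk>\aleph_0$.)

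The paper's proof uses the pigeon-hole in the opposite order. It first partitions $\cB\setminus\cD$ into $\vk$ many moieties $\cC_i$, builds from $\alpha$ a block automorphism $\beta$, and forms the $2^\vk$ elements $\lambda=\id\circledast(\circledast_{i\in I}\beta^{\varepsilon_i})$ with $\varepsilon_i\in\{0,1\}$. Since $|\Gamma:\Sigma|\le\vk<2^\vk$, two of these share a coset, and the ratio $\mu=\lambda_1\lambda_2^{-1}$ lies in $\Sigma$. The point is that $\mu$ itself is already of the form $\circledast_i\beta^{\nu_i}$, so it can be massaged into the desired automorphism \emph{using only conjugation by elements of $(\symf\cB)_{(\cD)}$}, which are already known to be in $\Sigma$. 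That is the missing idea: apply pigeon-hole first to land inside $\Sigma$, and design the family so that the ratio---not an individual member---can be post-processed by the permutational core you extracted in step~(a).
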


\begin{proof} The result is proved in \cite{To_Smallness}
for infinitely generated relatively free groups, and in fact
the proof given in \cite{To_Smallness} can be used
without significant changes in the general case.
For the reader's convenience, we reproduce
the plan of the proof.

(a). The key fact is that there exists a subset $\cD$
of $\cB$ of cardinality $< \vk$ such that
the stabilizer
$$
\symf\cB{}_{(\cD)}
$$
is contained in $\Sigma.$ This follows from
Theorem $2^{\mbox{\normalsize$\flat$}}$ of \cite{DiNeuTho}
quoted in the Introduction.

(b). Now let $\cC$ be a moiety of $\cB \setminus \cD.$ Consider
a partition
\begin{equation} \label{Part0cBmcD}
\cB \setminus \cD = \bigsqcup_{i \in I} \cC_{i}
\end{equation}
of $\cB \setminus \cD$ into moieties such that $I$
is an index set of cardinality $\vk$ and $\cC$ is a moiety
of $\cC_{i_0}$ for some $i_0 \in I.$
Then partition $\cC_{i_0}$ into $\aleph_0$ moieties:
\begin{equation}
\cC_{i_0}=\bigsqcup_{k \in \N} \cC_{i_0,k}
\end{equation}
so that $\cC_{i_0,0}=\cC.$ Take any automorphism $\a$ of the subalgebra
$\str{\cD,\cC_{i_0,0}}=\str{\cD,\cC}$
generated by $\cC \cup \cD$ which fixes $\cD$ pointwise.
Extend $\a$ on $\cC_{i_0}$ as follows:
$$
\beta = (\a \circledast \a\inv \circledast \id) \circledast (\a \circledast \a\inv \circledast \id) \circledast \ldots
$$
where the automorphism in the right-hand side corresponds to
the partition (\theequation).

The family $\Lambda=\{\lambda\}$ of automorphisms of $F$ such that
$$
\lambda=\id \circledast (\circledast_{i \in I} \beta^{\varepsilon_i})
$$
where an automorphism in the right-hand side corresponds
to the partition
$$
\cB = \cD \sqcup \bigsqcup_{i \in I} \cC_i
$$
and where $\varepsilon_i=0,1$ ($i\in I$), has the cardinality $2^\vk.$
Then there are distinct $\lambda_1,\lambda_2 \in \Lambda$
such that $\mu=\lambda_1 \lambda_2\inv$ is contained
in $\Sigma$ (in
fact, {\it in any subgroup of $\Gamma$ of index $< 2^\vk;$}
we shall use this fact below).

(c). Now one can find $\cB$-permutational automorphisms
$\pi_1,\pi_2 \in \symf\cB{}_{(\cD)}$ such that the product
$(\mu^{\pi_1} \mu)^{\pi_2}$ acts as $\a$ on $\cC=\cC_{i_0,0}$
and fixes the rest $\cB \setminus \cC$ of $\cB$
pointwise.
\end{proof}

Next, we are going to discuss some corollaries of Proposition
\ref{MStab-in-a-small-index-subgroup}.

A variety $\frak V$ of algebras is called
a {\it BMN-variety} \cite{To_Berg} if given
any free algebra $F$ of $\frak V$ of infinite
rank, any basis $\cB$ of $F$ and any partition
$$
\cB = \cB_1 \sqcup \cB_2 \sqcup \cC
$$
of $\cB$ into moieties, the automorphism group $\Gamma=\aut F$
of $F$ is generated by the stabilizers
$$
\Gamma_{(\cB_1),\{\str{\cB_2 \cup \cC}\}} \text{ and }
\Gamma_{(\cB_1),\{\str{\cB_2 \cup \cC}\}}.
$$
Equivalently, $\Gamma$ is generated:
(a) by any stabilizer of the form
\begin{equation}
\Gamma_{(\cC),\{\str{\cB \setminus \cC}\}}
\end{equation}
where $\cC$ is any moiety of $\cB$ and by all
$\cB$-permutational automorphisms; (b) by any
stabilizer of the form (\theequation) and by a suitable
$\cB$-permutational automorphism; (c) by
all $\cB$-moietous automorphisms of $F.$ One
of the nice properties of the group $\Gamma=\aut F$ is
that $\Gamma$ is a perfect group \cite[Th. 1.5]{To_Berg},
that is, $\Gamma$ is equal to its commutator subgroup
$[\Gamma,\Gamma].$

For instance, the variety
of all sets with no structure (as a corollary
of the results in \cite{DiNeuTho,MN}),
any variety of all vector
spaces over a fixed division ring (as a corollary
of the results in \cite{Macph}),
the variety of all abelian groups
and any variety $\frak N_c$ of nilpotent
groups of class $\le c$ are BMN-varieties
\cite{To_Berg}.

\begin{Prop} \label{BMN=>Harami}
Let $\frak V$ be a BMN-variety of algebras,
$F$ a free algebra of $\frak V$ of infinite
rank $\vk,$ $\cB$ a basis of $F$ and $\Gamma=\aut F.$ Then

{\em (i)} every proper normal subgroup
of $\Gamma$ has index $2^{\vk};$

{\rm (ii)} if $\nu > 1$ is any cardinal
such that $2^\nu < 2^\vk$ {\rm(}for instance,
any cardinal $\nu > 1$ such that $2^\nu \le \vk${\rm)} then $\Gamma$ has no
subgroups of index $\nu;$

{\em (iii)} if $\Sigma$ is a subgroup of $\Gamma$
of index $\le \vk,$ then there is
a subset $\cD$ of $\cB$ of cardinality $< \vk$ such that
the stabilizer $\Gamma_{(\cD),\{\str{\cB \setminus \cD}\}}$
is contained in $\Sigma.$
\end{Prop}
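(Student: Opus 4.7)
The plan is to derive all three parts from Proposition~\ref{MStab-in-a-small-index-subgroup} combined with the BMN-generation principle; I would treat (iii) first, then (i), and finally deduce (ii) from (i) by a standard core argument.

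Part~(iii) follows almost directly from Proposition~\ref{MStab-in-a-small-index-subgroup}: it supplies $\cD\sle\cB$ of cardinality $<\vk$ with $\mobf_{(\cD)}\sle\Sigma$. Now $\Gamma_{(\cD),\{\str{\cB\setminus\cD}\}}$ identifies canonically with $\aut{\str{\cB\setminus\cD}}$, a free algebra of $\frak V$ of rank $\vk$, and being a BMN-variety means this group is generated by its $(\cB\setminus\cD)$-moietous automorphisms. Any such automorphism, extended by the identity on $\str\cD$, is $\cB$-moietous: if $\cB\setminus\cD=\cE_1\sqcup\cE_2$ is the associated moiety partition of $\cB\setminus\cD$, then $\cB=(\cD\sqcup\cE_1)\sqcup\cE_2$ is a moiety partition of $\cB$, because $|\cD|<\vk$ forces $|\cD\sqcup\cE_1|=\vk=|\cE_2|$. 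Hence the generators of $\Gamma_{(\cD),\{\str{\cB\setminus\cD}\}}$ all lie in $\mobf_{(\cD)}\sle\Sigma$.

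For part~(i), let $N\lhd\Gamma$ be proper with $|\Gamma:N|<2^\vk$; I aim at the contradiction $N=\Gamma$. Fix an arbitrary moiety $\cC$ of $\cB$ and revisit the proof of Proposition~\ref{MStab-in-a-small-index-subgroup} with $\cD=\emptyset$: for each $\a\in\aut{\str\cC}$, the construction in part~(b) produces a family $\Lambda$ of cardinality $2^\vk$, and the remark there already guarantees that $\mu_\a:=\lambda_1\lambda_2\inv$ lies in any subgroup of index $<2^\vk$, in particular in $N$. Part~(c) then yields $\pi_1,\pi_2\in\symf\cB$ for which $(\mu_\a^{\pi_1}\mu_\a)^{\pi_2}$ acts as $\a$ on $\cC$ and as the identity on $\cB\setminus\cC$. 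Although the $\pi_i$ need not belong to $N$, normality of $N$ places the full conjugate into $N$; letting $\a$ vary gives $\Gamma_{(\cB\setminus\cC),\{\str\cC\}}\sle N$. Conjugating by $\cB$-permutational automorphisms (any two moieties of $\cB$ are interchangeable by one) extends this inclusion to every moiety of $\cB$, so $\mobf\sle N$. Since $\mobf$ generates $\Gamma$ in a BMN-variety, we conclude $N=\Gamma$, contradicting properness. Combining this lower bound with the trivial upper bound $|\Gamma|\le|F|^\vk=2^\vk$ yields the claimed equality of indices. The main obstacle here is recognizing that part~(b) of the proof of Proposition~\ref{MStab-in-a-small-index-subgroup} was designed precisely for arbitrary subgroups of index $<2^\vk$, and that the permutational automorphisms appearing in (c) are absorbed via normality of $N$ rather than via membership.

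Part~(ii) then follows by a standard core argument: if $\Sigma\le\Gamma$ has index $\nu>1$ with $2^\nu<2^\vk$, the core $N=\bigcap_{g\in\Gamma}g\Sigma g\inv$ is a proper normal subgroup with $\Gamma/N\hookrightarrow\sym{\Gamma/\Sigma}$, so $|\Gamma:N|\le 2^\nu<2^\vk$ (or $|\Gamma:N|\le\nu!<\aleph_0\le 2^\vk$ when $\nu$ is finite), contradicting (i).
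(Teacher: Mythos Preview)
Your proof is correct. Parts~(ii) and~(iii) match the paper's argument essentially verbatim; the paper phrases~(iii) via the equivalent BMN formulation using a single stabilizer $S=\Gamma_{(\cB\setminus\cC),\{\str\cC\}}$ together with $\symf\cB_{(\cD)}$, but the content is identical to your use of all moietous generators at once.

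For part~(i) you take a genuinely different route. The paper invokes a structural fact from \cite{To_Berg}: $\Gamma$ is the normal closure of a single permutational involution $\pi^*$ whose support on $\cB$ is a moiety. It then runs part~(b) of the proof of Proposition~\ref{MStab-in-a-small-index-subgroup} just once, with the input automorphism chosen to be a specific permutational involution $\rho$; the resulting nontrivial $\mu\in N$ restricts to $\cB$ as an involution with moiety support, so $\mu$ is a conjugate of $\pi^*$ and normality gives $N=\Gamma$ immediately. Your argument instead runs parts~(b) and~(c) for \emph{every} $\a\in\aut{\str\cC}$, absorbing the conjugating permutations $\pi_1,\pi_2$ via normality of $N$ rather than membership, and obtains the whole stabilizer $\Gamma_{(\cB\setminus\cC),\{\str\cC\}}\subseteq N$; BMN generation by moietous automorphisms then finishes. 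The paper's approach is more economical---one well-chosen element suffices---but imports the normal-closure result; yours is self-contained within the machinery already developed here and makes no appeal to that external fact. You also make explicit the upper bound $|\Gamma|\le|F|^\vk$ needed for the index to equal $2^\vk$ exactly, which the paper leaves implicit (and which, strictly speaking, requires $|F|\le 2^\vk$, a mild assumption on the language that holds in all the intended examples).
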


\begin{proof}
(i) It follows from the proof of the part (i) of
Theorem 2.5 in \cite{To_Berg} that $\Gamma$
is the normal closure of a suitable permutational
automorphism. Namely, if
$$
\cB = \cB_1 \sqcup \cB_2 \sqcup \cC
$$
is a partition of $\cB$ into moieties,
then for any $\cB$-permutational automorphism $\pi^*$
of order two of $F$ which
interchanges $\cB_1$ and $\cB_2$ and fixes
$\cC$ pointwise we have that $\Gamma$
is the normal closure of $\pi^*.$ Observe
that the support of the restriction $\pi^*|_\cB$
of $\pi^*$ on $\cB$ is a moiety of $\cB.$

Let $N$ be a normal subgroup of $\Gamma$ having index less than $2^{\vk}.$
Take a moiety $\cC$ of $\cB$ and consider a permutation
$r$ of order two of $\cC$ such that the support
of $r$ is a moiety of $\cC.$ Let $\rho$ be the
automorphism of the subalgebra $\str{\cC}$ induced
by $r.$ Apply then for $\cB,\cC$ and $\rho$ the argument
of part (b) of the proof of Proposition \ref{MStab-in-a-small-index-subgroup}
(assuming that $\cD=\varnothing$). It then follows that
$N$ contains a nonidentity automorphism of the form
$
\mu=\circledast_{i \in I} \beta(\rho)^{\nu_i}
$
where $\nu_i=-1,0,1\quad [=0,1]$ $(i \in I).$ It is easy to
see that the restriction of $\mu$ on $\cB$ is an
involution whose support
is a moiety of $\cB.$
Thus $N$ contains a conjugate of $\pi^*$ described
above, and hence $N=\Gamma,$ as claimed.

(ii) If $H$ is a subgroup of a given group $G,$
then the kernel of the natural action of $G$
on the quotient set $G/H$ is of index $\le |\text{Sym}(G/H)|.$
Thus if $\Gamma$ has a subgroup of index $\nu,$
it has a normal subgroup of index at most $2^\nu,$ contradicting
(i).

(iii) We continue to use the notation introduced
in (i). Let $\Sigma$ be a subgroup of $\Gamma$
of index at most $\vk.$ Then there is a subset $\cD$ of $\cB$ of cardinality
$< \vk$ such that $\Sigma \ge \symf\cB{}_{(\cD)},$
and, by Proposition \ref{MStab-in-a-small-index-subgroup},
if $\cC$ is any moiety of $\cB \setminus \cD,$
the stabilizer $S=\Gamma_{(\cB \setminus \cC),\{\str{\cC}\}}$
is contained in $\Sigma.$ Now by the definition of a BMN-variety, $S$
together with $\symf \cB_{(\cD)}$
generates the subgroup $\Gamma_{(\cD),\{\str{\cB \setminus \cD}\}} \cong
\aut{\str{\cB \setminus \cD}}.$
\end{proof}

It is interesting to compare Proposition \theProp\ with the situation
in the general case described below in Proposition \ref{SmInd=>Perf}
(a straightforward generalization of the
corresponding result from \cite{BrRom}). One of the
leading themes of the paper \cite{BrRom} by R.~Bryant and V.~Roman'kov
is the study of subgroups of index $< 2^{\aleph_0}$
of the automorphism groups of relatively free
algebras $F$ of arbitrary infinite rank. Note that the definitions
of the small index property given in \cite{BrRom} and that one
in the present paper are different: the definition in \cite{BrRom}
requires, regardless of $\rank(F),$ existence of pointwise
stabilizers of finite sets in all subgroups of $\aut F$
of index $< 2^{\aleph_0}.$ However the automorphism group $\aut F$
of a relatively free algebra $F$ of infinite
rank might simply not possess proper subgroups of index $< 2^{\aleph_0}$
as Proposition \ref{BMN=>Harami} demonstrates.

\begin{Lem} \label{i_ya_kak_sharik}
Let $F$ be a relatively free algebra with
a basis $\cB$ of infinite cardinality $\vk$ and
let $\cD$ be a subset of $\cB$ of cardinality
$< \vk.$ Then

{\rm (i)} for every automorphism $\s \in \aut F$
there is a $\cB$-moietous automorphism $\rho$ of $F$ such that
$\s|_{\cD}=\rho|_{\cD}$ \textup{(}cf. \cite[Lemma 2.1]{BrRom}\textup{)};

{\rm (ii)} the automorphism group $\Gamma=\aut F$ of $F$
is generated by all conjugate subgroups
of $\Gamma_{(\cD)}$ by elements of $\symf\cB$:
$$
\Gamma=\str{\pi \Gamma_{(\cD)} \pi\inv : \pi \in \symf\cB}
$$
\textup{(}cf. \cite[Lemma 4.3]{BrRom}\textup{)}.
\end{Lem}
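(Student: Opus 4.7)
For part (i), the plan is to locate a $\sigma$-invariant subalgebra generated by a basis subset containing $\cD$ of size $<\vk,$ then build $\rho$ by glueing $\sigma$ on that subalgebra with the identity elsewhere. I would construct such an $\cE \sle \cB$ by back-and-forth: set $\cE_0 = \cD$ and
$$
\cE_{n+1} = \cE_n \cup \bigcup_{b \in \cE_n}\bigl(\mathrm{supp}_\cB(\sigma(b)) \cup \mathrm{supp}_\cB(\sigma\inv(b))\bigr),
$$
where $\mathrm{supp}_\cB(x)$ denotes the finite set of basis elements appearing in the normal form of $x.$ Since each support is finite, $|\cE_n| \le \max(|\cD|,\aleph_0)$ for every $n,$ hence $\cE := \bigcup_n \cE_n$ satisfies $\sigma(\str{\cE}) = \str{\cE}$ and $|\cE| \le \max(|\cD|,\aleph_0) < \vk$ when $\vk$ is uncountable. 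Now extend $\cE$ to a moiety $\cB_2$ of $\cB$ with $\cB_1 := \cB \setminus \cB_2$ also a moiety; from the partition $\cB = \cB_1 \sqcup \cE \sqcup (\cB_2 \setminus \cE)$ one gets the coproduct decomposition $F = \str{\cB_1} \circledast \str{\cE} \circledast \str{\cB_2 \setminus \cE},$ and one defines
$$
\rho := \id_{\str{\cB_1}} \circledast (\sigma|_{\str{\cE}}) \circledast \id_{\str{\cB_2 \setminus \cE}}.
$$
Then $\rho(\cB) = \cB_1 \sqcup \sigma(\cE) \sqcup (\cB_2 \setminus \cE)$ is a basis of $F$ as the disjoint union of bases of the three coproduct factors, so $\rho \in \Gamma$; it fixes $\cB_1$ pointwise and stabilises $\str{\cB_2},$ whence $\rho \in \Gamma_{(\cB_1),\{\str{\cB_2}\}}$ is $\cB$-moietous; and $\rho|_\cD = \sigma|_\cD$ since $\cD \sle \cE.$

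For part (ii), given $\sigma \in \Gamma,$ apply (i) to produce a moietous $\rho$ with $\rho|_\cD = \sigma|_\cD.$ Then $\rho\inv \sigma$ fixes $\cD$ pointwise, so $\rho\inv \sigma \in \Gamma_{(\cD)}.$ If $\cB_1$ is the moiety with $\rho \in \Gamma_{(\cB_1),\{\str{\cB_2}\}},$ the inequality $|\cD| < \vk = |\cB_1|$ lets us pick a $\cB$-permutational automorphism $\pi \in \symf\cB$ with $\pi(\cD) \sle \cB_1,$ and therefore
$$
\pi\Gamma_{(\cD)}\pi\inv = \Gamma_{(\pi(\cD))} \sge \Gamma_{(\cB_1)} \ni \rho.
$$
Hence $\sigma = \rho \cdot (\rho\inv \sigma)$ is a product of two elements drawn from $\{\pi \Gamma_{(\cD)} \pi\inv : \pi \in \symf\cB\},$ which yields (ii).

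The main delicate point is the cardinality count in the back-and-forth for part (i): one needs $|\cE| < \vk$ so that $\cE$ fits inside a moiety of $\cB$ whose complement is still a moiety. This is automatic when $\vk$ is uncountable, but for $\vk = \aleph_0$ the iteration yields only $|\cE| \le \aleph_0$ and can in fact exhaust $\cB$ entirely (for instance for a shift-like $\sigma$); in that case a separate construction in the spirit of \cite[Lemma 2.1]{BrRom} is needed, producing $\rho$ by exploiting directly the fact that $\sigma(\cD),$ being part of the basis $\sigma(\cB)$ of $F,$ extends to a basis of a suitably chosen proper free factor $\str{\cB_2}$ containing $\cD.$
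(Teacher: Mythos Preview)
Your argument for part~(ii) is essentially the paper's: factor $\sigma = \rho \cdot (\rho^{-1}\sigma)$ with $\rho^{-1}\sigma \in \Gamma_{(\cD)}$, then observe that the moietous $\rho$ lies in some $\pi\Gamma_{(\cD)}\pi^{-1}$ because it already fixes a moiety $\cB_1$ pointwise and a permutational $\pi$ can carry $\cD$ into $\cB_1$.

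For part~(i) your route is genuinely different from the paper's, and the difference matters. Your idea is to find a small $\cE \subseteq \cB$ with $\sigma\langle\cE\rangle = \langle\cE\rangle$ and then set $\rho = \sigma|_{\langle\cE\rangle} \circledast \id$. This is clean and correct when $\vk$ is uncountable, but, as you note, it can fail outright when $\vk = \aleph_0$: a single automorphism may have no proper invariant free factor at all. The fix you sketch---that $\sigma(\cD)$, being part of the basis $\sigma(\cB)$, ``extends to a basis of a suitably chosen proper free factor $\langle\cB_2\rangle$''---is not justified in a general relatively free algebra. Knowing $\sigma(\cD) \subseteq \langle\cB_2\rangle$ does not imply that $\sigma(\cD)$ is part of a basis of $\langle\cB_2\rangle$; that would require an exchange property the variety need not have.

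The paper avoids this entirely with a uniform conjugation trick that never asks for a $\sigma$-invariant factor or a basis extension. One only needs $\cE \subseteq \cB$ of cardinality $<\vk$ with $\sigma(\cD) \subseteq \langle\cE\rangle$ (just collect supports once, no back-and-forth). Pick a moiety $\cC$ of $\cB \setminus \cE$ and any bijection $m : \cB \to \cC \cup \cE$ fixing $\cE$ pointwise; let $\mu : F \to \langle\cC \cup \cE\rangle$ be the induced isomorphism. Then $\mu\sigma\mu^{-1}$ is an automorphism of $\langle\cC\cup\cE\rangle$, and because $\mu$ fixes $\langle\cE\rangle$ pointwise while $\sigma(d)\in\langle\cE\rangle$, one has $\mu\sigma\mu^{-1}(d)=\sigma(d)$ for every $d\in\cD$. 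Now $\rho := (\mu\sigma\mu^{-1}) \circledast \id$ (relative to $\cB = (\cC\cup\cE) \sqcup (\cC\cup\cE)^{c}$) is the desired $\cB$-moietous automorphism. This works identically for all infinite $\vk$, including $\aleph_0$, and is what you should use to close the gap.
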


\begin{proof}
We follow the ideas of the proofs of Lemma 2.1
and Lemma 4.3 of \cite{BrRom}
which correspond to (i) and (ii), respectively,
in the case when $\cD$ is finite.

(i) Let $\cE$ be a subset of $\cB$ such that
\begin{equation}
\s \cD \sle \str{\cE}.
\end{equation}
Clearly, if $\cD$ is finite, there exists a finite $\cE \sle \cB$
with (\theequation); if $\cD$ is of infinite cardinality $\nu < \vk,$
then there exists an $\cE \sle \cB$ with (\theequation) of cardinality $\nu < \vk.$

Let $\cC$ be a moiety of $\cB \setminus \cE.$ Then there is a bijection
$m : \cB \to \cC \cup \cE$ which takes every element of $\cE$
to itself, and hence an isomorphism $\mu : \str{\cB} \to \str{\cC \cup \cE}$
of algebras $F=\str\cB$ and $\str{\cC \cup \cE}$ which extends
$m;$ in particular, $\mu(e)=e$ for all $e \in \cE.$ It follows that the map
$\mu \s \mu\inv$ is an automorphism of the subalgebra
$\str{\cC \cup \cE}$ which coincides with $\s$ on $\cD$:
$$
\mu \s \mu\inv(d) = \mu \s(d) =\mu(\s d)=\s(d) \qquad [d \in \cD].
$$
Then the automorphism
$$
(\mu \s \mu\inv) \circledast \id
$$
which corresponds to the partition $\cB = (\cC \cup \cE) \sqcup (\cC \cup \cE)^c$
can be taken as a required $\cB$-moietous automorphism $\rho.$

(ii) Let $\s \in \Gamma.$ Then by (i) there is a $\cB$-moietous
automorphism $\rho$ of $F$ such that $\rho\inv \s =\gamma \in \Gamma_{(\cD)}.$
It is easy to find a $\cB$-permutational automorphism $\pi\in \symf\cB$ and
a $\cB$-moietous automorphism $\rho_0 \in \Gamma_{(\cD)}$
such that $\rho = \pi \rho_0 \pi\inv.$ Therefore,
$$
\s = \pi \rho_0 \pi\inv \cdot \gamma \in  \pi \Gamma_{(\cD)} \pi\inv \cdot \Gamma_{(\cD)}
$$
and the result follows.
\end{proof}

\begin{Prop} \label{SmInd=>Perf}
Let $F$ be a relatively free algebra of infinite
rank $\vk$ which has the small index property. Then

{\rm (i)} every proper normal subgroup of $\Gamma=\aut F$
has index $> \vk;$

{\rm (ii)} the automorphism group $\aut F$ of $F$
is perfect, that is, $[\Gamma,\Gamma]=\Gamma.$
\end{Prop}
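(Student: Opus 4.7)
The approach is to prove (i) by a direct application of the small index property together with Lemma~\ref{i_ya_kak_sharik}(ii), and then to deduce (ii) from (i) via the elementary observation that every nontrivial abelian group has a proper subgroup of countable index.

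For (i), let $N$ be a normal subgroup of $\Gamma$ with $|\Gamma:N| \le \vk$. The small index property produces $U \sle F$ with $|U| < \vk$ and $\Gamma_{(U)} \sle N$. Fix a basis $\cB$ of $F$ and let $\cD \sle \cB$ be the union of the (finite) sets of basis elements appearing in the normal-form expressions of the elements of $U$; then $|\cD| \le \max(|U|,\aleph_0) < \vk$ and $U \sle \str{\cD}$, so $\Gamma_{(\cD)} \sle \Gamma_{(U)} \sle N$. By Lemma~\ref{i_ya_kak_sharik}(ii), $\Gamma = \str{\pi \Gamma_{(\cD)} \pi\inv : \pi \in \symf\cB}$; normality of $N$ places every such conjugate inside $N$, whence $N = \Gamma$.

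For (ii), I argue by contradiction: assume $[\Gamma,\Gamma] \ne \Gamma$, so $A = \Gamma/[\Gamma,\Gamma]$ is a nontrivial abelian group. Choose $\av a \in A \setminus \{0\}$ and, by Zorn's lemma, pick $B \le A$ maximal among subgroups not containing $\av a$. For every $\av x \in A \setminus B$, maximality forces $\av a \in B + \str{\av x}$, so $\str{\av a + B} \sle \str{\av x + B}$ in $A/B$; consequently every nonzero subgroup of $A/B$ contains $\str{\av a + B}$. A short order check (if $\av a + B$ had composite order $mn$ with $m,n>1$, then $\str{m(\av a + B)}$ would be a nonzero subgroup missing $\av a + B$; infinite order is ruled out by feeding $\av x = 2\av a$ into maximality, which yields a relation $(2n-1)\av a \in B$) shows that $\av a + B$ has prime order $p$, and hence $A/B$ is a cocyclic abelian $p$-group, isomorphic either to $\Z/p^n$ or to the Pr\"ufer group $\Z(p^\infty)$, and so countable. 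Letting $\pi\colon \Gamma \to A$ be the canonical projection and $N = \pi\inv(B)$, we find that $N$ is a proper normal subgroup of $\Gamma$ with $|\Gamma:N| = |A:B| \le \aleph_0 \le \vk$, contradicting~(i). Hence $[\Gamma,\Gamma] = \Gamma$.

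The only step that requires any care is the order analysis in the Zorn argument of (ii); everything else is a routine unwinding of definitions, and the proof of (ii) is in essence a short cardinality reduction to (i). In particular, the bulk of the content of the proposition sits in part~(i), which in turn rests on Lemma~\ref{i_ya_kak_sharik}(ii) and, through the small index property, on Proposition~\ref{MStab-in-a-small-index-subgroup}.
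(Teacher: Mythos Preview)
Your proof is correct and follows essentially the same route as the paper: for (i) you combine the small index property with Lemma~\ref{i_ya_kak_sharik}(ii) and normality, and for (ii) you reduce to (i) via the fact that every nontrivial abelian group has a nonzero countable quotient. The only difference is that the paper simply cites this last fact as ``known,'' whereas you supply a self-contained Zorn/cocyclic argument for it; your extra step of passing from the small-index set $U$ to a basis subset $\cD$ with $U \sle \str{\cD}$ is also left implicit in the paper.
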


\begin{proof} We argue like in \cite{BrRom}.

(i) If a normal subgroup $N$ of $\Gamma$ has index
$\le \vk,$ then  by part (ii) of Lemma \ref{i_ya_kak_sharik}
$N=\Gamma.$

(ii). It is known that every
nonzero abelian group has a nonzero countable quotient. This implies
that if $[\Gamma,\Gamma]$ is a proper subgroup of $\Gamma,$
then $\Gamma$ has a proper normal subgroup of countable
index which contradicts (i).
\end{proof}

The next proposition introduces a class of (necessarily
BMN-) varieties all whose free algebras of infinite rank have
the small index property. We shall show below that
any variety $\mathfrak N_c$ of all nilpotent
groups of class $\le c$ is in this class.

\begin{Prop} \label{Stab=<Shariki>}
Let $\mathfrak V$ be a variety of algebras such that
for every free algebra $F$ of infinite rank $\vk$ in $\mathfrak V$ and
every basis $\cB$ of $F$ the following is true:
given any subset $\cD$ of $\cB$ of cardinality
$< \vk,$ the stabilizer $\Gamma_{(\cD)}$ where $\Gamma=\aut F$
is generated by all $\cB$-moietous automorphisms
of $F$ which fix $\cD$ pointwise:
$$
\Gamma_{(\cD)} =\str{\mobf{}_{(\cD)}}.
$$
Then $\mathfrak V$ is a BMN-variety and
every free algebra of infinite
rank in $\mathfrak V$ has the small index property.
\end{Prop}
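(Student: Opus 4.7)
The plan is that both conclusions follow directly from the hypothesis once it is combined with Proposition~\ref{MStab-in-a-small-index-subgroup}. No serious new idea seems to be required; the only real content lies in specialising the subset $\cD$ appropriately in each case, so I do not expect any single step to be a genuine obstacle.

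For the BMN property, I would take $\cD=\varnothing$ in the hypothesis. This gives $\Gamma=\Gamma_{(\varnothing)}=\str{\mobf}$, so $\Gamma$ is generated by its $\cB$-moietous automorphisms, for every free algebra $F$ of infinite rank in $\mathfrak V$ and every basis $\cB$ of $F$. But this is precisely condition~(c) in the equivalent characterisations of BMN-varieties recorded immediately before Proposition~\ref{BMN=>Harami}, and so $\mathfrak V$ is a BMN-variety.

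For the small index property, let $F$ be a free algebra in $\mathfrak V$ of infinite rank $\vk$ with a basis $\cB$, and let $\Sigma\le\Gamma=\aut F$ be a subgroup of index at most $\vk$. Proposition~\ref{MStab-in-a-small-index-subgroup} supplies a subset $\cD\sle\cB$ with $|\cD|<\vk$ such that $\mobf{}_{(\cD)}\sle\Sigma$. Feeding the same $\cD$ into the hypothesis yields
$$
\Gamma_{(\cD)}=\str{\mobf{}_{(\cD)}}\sle\Sigma,
$$
and since $\cD$ is a subset of $F$ of cardinality $<\vk$, the small index property is established.

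The only point that calls for verification is the compatibility of the two uses of $\cD$: Proposition~\ref{MStab-in-a-small-index-subgroup} selects $\cD$ as a subset of the basis $\cB$ of cardinality $<\vk$, while the hypothesis of the proposition is stated for exactly such subsets, so the witness produced by Proposition~\ref{MStab-in-a-small-index-subgroup} can be plugged into the hypothesis without any adjustment. If anything might be delicate, it would be this matching of set-theoretic requirements, but in fact both statements are formulated with the same restriction on $|\cD|$.
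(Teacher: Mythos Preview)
Your argument is correct and matches the paper's proof for the small index property. For the BMN conclusion the paper instead invokes part~(i) of Lemma~\ref{i_ya_kak_sharik} (given $\sigma\in\Gamma$ and some $\cD$, find a $\cB$-moietous $\rho$ with $\rho^{-1}\sigma\in\Gamma_{(\cD)}$, then apply the hypothesis), whereas you simply set $\cD=\varnothing$ and read off $\Gamma=\str{\mobf}$ directly from the hypothesis; your route is more economical and avoids the auxiliary lemma entirely.
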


\begin{proof} The second statement follows from Proposition \ref{MStab-in-a-small-index-subgroup}.
To show that $\mathfrak V$ is a BMN-variety, apply part (i) of
Lemma \ref{i_ya_kak_sharik}.
\end{proof}

\section{The small index property for free nilpotent groups}

\begin{Th} \label{SmIndProp4Nilps}
Let $c \ge 1$ be a natural number. Then the variety $\mathfrak N_c$
of all nilpotent groups of class $\le c$ satisfies the conditions of Proposition
{\rm \ref{Stab=<Shariki>}}. Consequently, every infinitely generated
free nilpotent/abelian group has the small index property.
\end{Th}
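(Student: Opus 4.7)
I will prove Theorem \ref{SmIndProp4Nilps} by induction on the nilpotency class $c$, verifying in each step the generation condition of Proposition \ref{Stab=<Shariki>}: that for $F$ free nilpotent of class $\le c$ with basis $\cB$ of infinite rank $\vk$ and $\cD\sle\cB$ with $|\cD|<\vk$, every $\sigma\in\Gamma_{(\cD)}$ is a finite product of elements of $\mobf{}_{(\cD)}$.

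\emph{Base case $c=1$.} Here $F$ is free abelian on $\cB$. Decompose $\sigma\in\Gamma_{(\cD)}$ via its block form relative to $\cD,\cB\setminus\cD$ as $\sigma=\sigma_B\sigma_A$, where $\sigma_A$ is a shear ($b\mapsto b+A_b$ for $b\in\cB\setminus\cD$, with $A_b$ a finitely-supported element of $\Z\cdot\cD$, and identity on $\cD$) and $\sigma_B\in\Gamma_{(\cD)}$ acts by some element of $GL(\Z^{(\cB\setminus\cD)})$ while fixing $\cD$. For any moiety partition $\cB\setminus\cD=\cY_1\sqcup\cY_2$, the shear splits as $\sigma_A=\sigma_{A,1}\sigma_{A,2}$ with $\sigma_{A,i}$ acting only on $\cY_i$; each $\sigma_{A,i}$ is moietous with partition $(\cD\cup\cY_{3-i},\,\cD\cup\cY_i)$ and fixes $\cD$. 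For $\sigma_B$, I invoke the classical matrix analogue of the Dixon--Neumann--Thomas decomposition: every element of the infinite-rank linear group $GL(\Z^{(I)})$ is a bounded product of automorphisms supported on moieties of $I$ (obtainable by Macpherson-type analysis using elementary transformations, sign changes and almost-permutations localizable to moieties), and each such factor, extended by identity on $\cD$, is moietous in $\Gamma_{(\cD)}$.

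\emph{Inductive step $c\ge 2$.} Let $\bar F=F/\gamma_c(F)$, free nilpotent of class $c-1$ on $\bar\cB$. The projection $\aut F\twoheadrightarrow\aut\bar F$ is surjective (a classical lifting fact about free nilpotent groups), so any $\sigma\in\Gamma_{(\cD)}$ induces $\bar\sigma\in\aut(\bar F)_{(\bar\cD)}$. By the inductive hypothesis $\bar\sigma=\bar\tau_1\cdots\bar\tau_n$ with each $\bar\tau_i\in\mathrm{Mo}_{\bar\cB}(\bar F)_{(\bar\cD)}$ having some moiety partition $(\bar\cA_1^{(i)},\bar\cA_2^{(i)})$. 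Lift each $\bar\tau_i$ to a $\cB$-moietous $\tau_i\in\mobf{}_{(\cD)}$ with the corresponding partition of $\cB$, using surjectivity of $\aut\str{\cA_2^{(i)}}\twoheadrightarrow\aut\str{\bar\cA_2^{(i)}}$ combined with an adjustment by a suitable kernel element enforcing pointwise fixation of $\cD\cap\cA_2^{(i)}$. Then $\sigma':=\tau_n\inv\cdots\tau_1\inv\sigma$ lies in the IA-layer $K_c\cap\Gamma_{(\cD)}$, where $K_c:=\ker(\aut F\to\aut\bar F)$.

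\emph{Handling the IA-layer.} This is the main technical step. Since $\gamma_c(F)$ is central, $K_c$ is abelian, and each $\sigma'\in K_c\cap\Gamma_{(\cD)}$ is specified by a family $(k_b)_{b\in\cB}$ with $k_b\in\gamma_c(F)$ a finite $\Z$-combination $\prod_C C^{\alpha_{b,C}}$ of weight-$c$ basic commutators in $\cB$, satisfying $\sigma'(b)=b\,k_b$ and $k_d=1$ for $d\in\cD$. Partition $\cB\setminus\cD=\cX_1\sqcup\cdots\sqcup\cX_{c+2}$ into $c+2$ moieties of $\cB$. Since each basic commutator of weight $c$ involves at most $c$ basis elements and thus occupies at most $c$ of the $c+2$ moieties, the set of moieties ``missed'' by any given basic commutator $C$ has cardinality at least two. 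For each pair $(b,C)$ with $\alpha_{b,C}\ne 0$, assign $C$ to some missed moiety $j(b,C)$ distinct from the moiety $\cX_{j(b)}$ containing $b$ (possible since missed moieties number $\ge 2$ and we exclude at most one). Define $\rho_j\in\aut F$ by $\rho_j|_{\cX_j}=\id$ and $\rho_j(b)=b\cdot\prod_{C:\,j(b,C)=j}C^{\alpha_{b,C}}$ for $b\in\cB\setminus\cX_j$. Each $\rho_j$ is moietous with partition $(\cX_j,\cB\setminus\cX_j)$ (the contributions all lie in $\str{\cB\setminus\cX_j}\cap\gamma_c(F)$), belongs to $K_c$, and fixes $\cD$ pointwise (since $k_d=1$ forces each $k_d^{(j)}=1$); hence $\rho_j\in\mobf{}_{(\cD)}$. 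By the abelianness of $K_c$ and the bookkeeping $\sum_j k_b^{(j)}=k_b$, we obtain $\sigma'=\rho_1\cdots\rho_{c+2}$, a finite product of moietous elements of $\mobf{}_{(\cD)}$. The pigeonhole argument on commutator supports relative to the $(c+2)$-moiety partition is the crucial combinatorial observation that renders the IA-layer decomposition into a bounded number of moietous factors, and it is the main obstacle to reducing the number of factors further.
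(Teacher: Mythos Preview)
Your argument is correct and follows the same overall architecture as the paper's proof: induction on $c$; in the abelian base case a block decomposition into a ``linear'' part handled via the BMN-property for free abelian groups and a shear split over a two-moiety partition; in the inductive step the Bryant--Macedo\'nska lifting \cite{BrMa} together with a central correction to force pointwise fixation of $\cD$; and a pigeonhole/partition argument for the IA-layer $K_c\cap\Gamma_{(\cD)}$.

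The one substantive variation is in the IA-layer. The paper first bisects $\cB\setminus\cD=\cF\sqcup\cG$ so that each of $\a_\cF,\a_\cG$ moves only one half, and then partitions $\cF$ into $c+1$ moieties, assigning each weight-$c$ basic commutator to a moiety of $\cF$ it avoids; this costs $2(c+1)$ moietous factors. You instead partition $\cB\setminus\cD$ directly into $c+2$ moieties and exploit that a weight-$c$ commutator misses at least \emph{two} of them, so one can always pick a missed moiety $j(b,C)$ different from the moiety containing $b$; this collapses the two steps into one and uses only $c+2$ factors. Both devices serve the same purpose, and yours is a little slicker.

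Two minor points. First, the moiety partition you write for $\sigma_{A,i}$ is not a partition of $\cB$ (you placed $\cD$ in both halves); the correct witness is $(\cY_{3-i},\,\cD\cup\cY_i)$, since $\sigma_{A,i}$ fixes $\cY_{3-i}$ pointwise and preserves $\str{\cD\cup\cY_i}$. Second, your appeal to a ``Macpherson-type analysis'' for $GL(\Z^{(I)})$ is precisely the statement that the variety of abelian groups is a BMN-variety, which the paper invokes by citing \cite{To_Berg}; you should cite that rather than gesture at an ad hoc argument.
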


\begin{proof}
Let $N$ be an infinitely generated free nilpotent group
of nilpotency class $c.$ Consider a basis $\cB$ of $N$ and let $\cD$ be a subset of $\cB$ of cardinality
$< \vk=\rank(N).$ Write $\cE$ for $\cB \setminus \cD$ and
let
$$
\cE=\{e_i : i \in I\}
$$
where $I$ is an index set of cardinality $\vk.$

Write $\Phi$ for the subgroup of $\Gamma$ generated by  $\mathrm{Mo}_\cB(N)_{(\cD)}.$ Thus our aim is to show that $\Phi$ is
equal to $\Gamma_{(\cD)}.$

We use induction on $c.$ Let then $c=1,$ which means
that $N$ is a free abelian group. Suppose that $\s \in \Gamma_{(\cD)}$ where $\Gamma=\aut N.$
Then
$$
N = \str{\cD} \oplus \str{\cE} = \str{\cD} \oplus \str{\s \cE}.
$$
It follows that there exists a basis $\{e_i' : i \in I\}$
of the group $\str\cE$ and elements
$u_i \in \str{\cD}$  such that
$$
\s e_i = e_i' + u_i \qquad [i \in I]
$$
and
$$
\{{e_i'} + {u_i} : i \in I\}
$$
is a basis of $\str{{\s\cE}}.$

As the variety of all abelian groups is a BMN-variety \cite{To_Berg},
the automorphism $\gamma \in \Gamma_{(\cD)}$
such that
$$
\gamma_0 e_i = e_i' \qquad [i \in I]
$$
is a product of $\cB$-moietous automorphisms from $\Gamma_{(\cD)},$
and hence an element of $\Phi.$ Then
$$
\gamma_0^{-1} \s e_i = e_i + u_i \qquad [i \in I].
$$

Let $I = I_1 \sqcup I_2$ be a partition of $I$ into moieties.
Clearly, both automorphisms $\pi_1,\pi_2 \in \Gamma_{(\cD)}$ where
\begin{alignat*}5
\pi_1 e_i &= e_i + u_i, &&\quad &&\pi_2 e_i=e_i,    &&[i \in I_1],\\
\pi_1 e_i &= e_i,     &&      &&\pi_2 e_i=e_i+ u_i\quad&&[i \in I_2]
\end{alignat*}
are in $\Phi$ and their product $\gamma_1=\pi_1 \pi_2$ takes
$e_i$ to $e_i+ u_i$ for all $i \in I.$ Hence
$$
\gamma_1^{-1} \gamma_0^{-1} \s e_i = e_i \qquad [i \in I],
$$
and $\s$ is in $\Phi.$

Recall that $\gamma_k(G)$ where $k \ge 1$ denotes the $k$-th
term of the lower central series of a group $G$ (defined
inductively as follows: $\gamma_1(G)=G$ and
$\gamma_{k+1}(G)=[\gamma_k(G),G]).$

We consider the induction step. Let $N$
be a free nilpotent group of nilpotency
class $c \ge 2.$ Then $\gamma_{c+1}(N)=\{1\},$
$\gamma_{c}(N)$ is the center of $N,$
and the quotient group $M=N/\gamma_c(N)$
is a free nilpotent group of nilpotency class
$c-1.$ The main result of \cite{BrMa} implies
that the natural homomorphism $\aut N \to \aut M$
determined by the natural homomorphism
$\eps: N \to M$ is surjective.

We claim that any $\eps \cB$-moietous
automorphism $s \in \aut{M}_{(\eps \cD)}$ of $M$ can be lifted to a $\cB$-moietous
automorphism $\s \in \aut N_{(\cD)}=\Gamma_{(\cD)}$ of $N.$ Indeed, assume that
$s$ preserves the subgroup $\str{\eps \cC}$
and fixes pointwise $\eps \cB \setminus \eps \cC$
where $\cC$ is a moiety of $\cB$ containing $\cD.$ As the natural
homomorphism $\aut{\str \cC} \to \aut{\str{\eps \cC}}$
of the automorphism groups of infinitely generated
free nilpotent groups $\str \cC$ and $\str{\eps \cC}$
is surjective by the quoted result of \cite{BrMa},
there is a $\cB$-moietous automorphism $\s_0$ of $N$
which induces $s$ such that: (a) $\s_0$ fixes pointwise $\cB \setminus \cC;$
(b) $\s_0$ preserves the subgroup $\str{\cC}$ and
(c) $\s_0$ fixes all elements of $\cD$ modulo
$\gamma_c(N).$ It is well-known that given any family
$\{c_b : b \in \cB\}$ of elements of $\gamma_2(N),$
there is an IA-automorphism of $N$ which takes
$b$ to $b c_b$ $(b \in \cB).$ Let then $\s_1$
be the automorphism of $N$ which coincides with
$\s_0$ on $\cD$ and fixes $\cB \setminus \cD$
pointwise; clearly, $\s_1$ preserves $\str{\cC}$
and fixes $\cB \setminus \cC$ pointwise. Now $\s=\s_1^{-1} \s_0 \in \Gamma_{(\cD)}$
is a $\cB$-moietous
automorphism of $N$ which induces $s,$ as desired.

The induction hypothesis then implies that for every $\s \in \Gamma_{(\cD)}$
there is an element $\gamma \in \Phi$
such that $\a=\gamma\inv \s \in \Gamma_{(\cD)}$ induces the
 trivial automorphism of $M=N/\gamma_c(N).$

We claim that $\a$ belongs to $\Phi.$ Suppose that $\a$
acts on $\cE=\{e_i : i \in I\}$ as follows:
\begin{equation} \label{alpha_from_I_c}
\a e_i =e_i w_i \qquad [i \in I]
\end{equation}
where $w_i=w(e_i) \in \gamma_c(N).$

Observe that every automorphism $\delta$ of $N$ which
fixes $N$ pointwise modulo the subgroup $\gamma_c(N),$
\begin{equation}
\delta x \equiv x \Mod{\gamma_c(N)},
\end{equation}
takes every element of $\gamma_c(N)$ to itself and that
any two automorphisms of $N$ of the form (\theequation) are then
commuting.

Let us partition $\cE = \cB \setminus \cD$ into moieties:
$$
\cB \setminus \cD = \cF \sqcup \cG
$$
and define automorphisms $\a_{\cF}, \a_\cG \in \Gamma_{(\cD)}$ of $N$ as follows:
\begin{alignat*}  5
\a_\cF f &=f,          && \a_\cG f &&= \a f,\quad && [f \in \cF],\\
\a_\cF g &=\a g,\quad &&  \a_\cG g &&= g,         &&  [g \in \cG].
\end{alignat*}
Then $\a = \a_\cF \a_\cG = \a_\cG \a_\cF$ and $\a_\cF \in \Gamma_{(\cD \cup \cF)},$
$\a_\cG \in \Gamma_{(\cD \cup \cG)}.$

Let us prove that $\a_\cF$ is in $\Phi.$ By symmetry, it will imply
that $\a_\cG$ is also in $\Phi,$ whence the result.

Partition $\cF$ into $(c+1)$ moieties:
$$
\cF = \cF_1 \sqcup \ldots \sqcup \cF_{c+1}.
$$
Every element of $\gamma_c(N)$ can be written
as a product of left-normed basic commutators
$[b_1,\ldots,b_c]$ of weight $c$ where $b_1,\ldots,b_c$ run over $\cB.$ For every $i \in I,$
write then $w_i=w(e_i) \in \gamma_c(N),$ participating in (\theequation) above, as
$$
w_i=w(e_i) =t_{1}(e_i) \ldots t_{c+1}(e_i)
$$
where $t_{k}(e_i)\in \gamma_c(N)$ is a product of left-normed basic commutators
of weight $c$ which
do {\it not} have occurrences from $\cF_k$ ($k=1,\ldots,c+1).$
For every $k=1,\ldots,c+1,$ define $\beta_k \in \Gamma_{(\cD \cup \cF)}$
as follows:
$$
\beta_k g = g t_k(g) \qquad [g \in \cG].
$$
As
$$
t_k(g) \in \str{\cD \cup (\cF \setminus \cF_k) \cup \cG}=\str{\cB \setminus \cF_k}
$$
for all $g \in \cG,$ we have that $\beta_k$ fixes
$\cF_k$ pointwise and preserves the subgroup $\str{\cB\setminus\cF_k}$
setwise $(k=1,\ldots,c+1).$ It follows that $\beta_k$
is a $\cB$-moietous automorphism of $N$ which fixes
$\cD$ pointwise, or, in other words, an element of $\Phi.$
Finally, as
$$
\a_\cF = \beta_1 \ldots \beta_{c+1},
$$
so is $\a_\cF.$
\end{proof}

\begin{Cor}
Let $F$ be a free group of infinite rank and let $V(F)$
be a verbal subgroup of $F$ which contains a term
$\gamma_c(F)$ where $c \ge 2$ of the lower central series of $F.$
Then the group $F/V(F)$ has the small index property.
\end{Cor}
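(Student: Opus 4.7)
The plan is to apply Proposition \ref{Stab=<Shariki>} to the variety $\mathfrak{V}$ of which $R := F/V(F)$ is the free algebra of rank $\vk$. Write $N := F/\gamma_c(F)$ for the free nilpotent group of class $c-1$ of rank $\vk$ and $W := V(F)/\gamma_c(F)$, so that $R = N/W$ and $W$ is a verbal, hence characteristic, subgroup of $N$. By Theorem \ref{SmIndProp4Nilps}, $N$ satisfies the stabilizer-generation condition of Proposition \ref{Stab=<Shariki>}: for every $\cD \sle \cB$ with $|\cD| < \vk$ one has $\aut{N}_{(\cD)} = \str{\mathrm{Mo}_\cB(N)_{(\cD)}}$. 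The aim is to transfer this condition to $R$ via the natural homomorphism $\pi : \aut N \to \aut R$ and then to conclude by Proposition \ref{Stab=<Shariki>}.

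Fix $\cD \sle \cB$ of cardinality $< \vk$, let $\bar\cD$ denote its image in $R$, and take an arbitrary $\bar\s \in \aut{R}_{(\bar\cD)}$. The critical step is to construct a lift $\s \in \aut{N}_{(\cD)}$ with $\pi(\s) = \bar\s$. Granted this, Theorem \ref{SmIndProp4Nilps} produces a factorisation $\s = \mu_1 \cdots \mu_k$ into $\cB$-moietous automorphisms of $N$ fixing $\cD$, and applying $\pi$ term by term gives $\bar\s = \pi(\mu_1) \cdots \pi(\mu_k)$. Each $\pi(\mu_i)$ is $\bar\cB$-moietous and fixes $\bar\cD$ pointwise: if $\mu_i$ fixes $\cB \setminus \cC_i$ pointwise and preserves $\str{\cC_i}$ setwise for some moiety $\cC_i$ of $\cB$ disjoint from $\cD$, then $\pi(\mu_i)$ fixes $\bar\cB \setminus \bar\cC_i$ pointwise and preserves $\str{\bar\cC_i}$ setwise, while $\bar\cC_i$ is a moiety of $\bar\cB$ because the quotient map $N \to R$ restricts to a bijection $\cB \to \bar\cB$ of bases. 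This verifies the hypothesis of Proposition \ref{Stab=<Shariki>} for $R$, whence $R$ has the small index property.

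The main obstacle is the lifting step, which I would split into (a) the surjectivity of $\pi : \aut N \to \aut R$, a tameness statement for the verbal quotient $R = N/W$ of the free nilpotent group $N$, available from the same circle of ideas around \cite{BrMa} used in the proof of Theorem \ref{SmIndProp4Nilps}; and (b) the replacement of an arbitrary preimage $\s_0 \in \aut N$ of $\bar\s$, which merely satisfies $\s_0(d) \equiv d \Mod{W}$ for $d \in \cD$, by one that fixes $\cD$ pointwise. Step (b) is carried out just as in the analogous adjustment in the proof of Theorem \ref{SmIndProp4Nilps}: one composes $\s_0$ with a correcting automorphism $\s_1 \in \aut N$ that fixes $\cB \setminus \cD$ pointwise, lies in the kernel of $\pi$, and cancels the discrepancies $d^{-1}\s_0(d) \in W$ on $\cD$.
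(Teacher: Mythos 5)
Your route differs from the paper's. The paper does not re-verify the hypothesis of Proposition \ref{Stab=<Shariki>} for $R=F/V(F)$; it works entirely at the level of subgroups: by \cite{BrMa} the natural map $\eps\colon\aut{F/\gamma_c(F)}\to\aut{F/V(F)}$ is surjective, so the full preimage $\eps\inv(\Sigma)$ of a subgroup $\Sigma$ of index at most $\vk$ is again of index at most $\vk$; the small index property of $N=F/\gamma_c(F)$ (already established in Theorem \ref{SmIndProp4Nilps}) places a stabilizer $\Gamma_{(\cD)}$ inside $\eps\inv(\Sigma)$, and pushing forward gives $\eps(\Gamma_{(\cD)})\le\Sigma$. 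This is shorter than your plan, which in addition pushes moietous decompositions down to $R$ and invokes Proposition \ref{Stab=<Shariki>} a second time. Both arguments, however, funnel through the same nontrivial point: that every automorphism of $R$ fixing $\overline{\cD}$ pointwise lifts to an automorphism of $N$ fixing $\cD$ pointwise (in the paper this is the final, unproved assertion that $\eps(\Gamma_{(\cD)})$ \emph{is} a pointwise stabilizer in $\aut R$).

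Your justification of that lifting, step (b), has a genuine gap. In the proof of Theorem \ref{SmIndProp4Nilps} the correction works because the discrepancies $d\inv\s_0(d)$ lie in $\gamma_c(N)\sle\gamma_2(N)$, so the quoted IA-automorphism fact guarantees that the map $d\mapsto\s_0(d)$ $(d\in\cD)$, $b\mapsto b$ $(b\in\cB\setminus\cD)$ extends to an automorphism $\s_1$ of $N$. Here the discrepancies lie in $W=V(F)/\gamma_c(F)$, which need not be contained in $\gamma_2(N)$ (take $V(F)=F^m\gamma_c(F)$), and then the prescribed correcting automorphism need not exist: already for $N$ free abelian and $W=2N$, a preimage $\s_0$ of the identity of $N/2N$ may send a basis element $d$ to $3d+4e$ (and $e$ to $2d+3e$), and $\{3d+4e\}\cup(\cB\setminus\{d\})$ is not a basis of $N$, so no automorphism agrees with $\s_0$ on $d$ while fixing $\cB\setminus\{d\}$ pointwise. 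To close the gap you must either choose the preimage $\s_0$ of $\bar\s$ more carefully or prove directly that the congruence stabilizer $\{\s\in\aut N:\s d\equiv d\Mod{W}\ \text{for all}\ d\in\cD\}$ equals $\Gamma_{(\cD)}\cdot\ker\eps$; note that this is precisely the point the paper's own proof also leaves implicit.
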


\begin{proof}
By the main result of \cite{BrMa} we have already referred to
above, the natural homomorphisms
$$
\aut F \to \aut{F/\gamma_c(F)} \text{ and } \aut F \to \aut{F/V(F)}
$$
determined by the natural homomorphisms
$$
F \to F/\gamma_c(F) \text{ and } F \to F/V(F),
$$
respectively, are surjective. It follows that the natural
homomorphism
$$
\eps : \aut{F/\gamma_c(F)} \to \aut{F/V(F)}
$$
determined by the natural homomorphism $F/\gamma_c(F) \to F/V(F)$ is also surjective.

Let $\Sigma \le \aut{F/V(F)}$ be a subgroup
of index at most $\rank(F/V(F))=\rank(F).$ Then the full
preimage $\eps\inv(\Sigma)$ of $\Sigma$ is of index
at most $\rank(F/\gamma_c(F))=\rank(F)$ in $\Gamma=\aut{F/\gamma_c(F)}.$
It follows that there is a subset $\cD$ of $F/\gamma_c(F)$
of cardinality $< \rank(F)$ such that
$$
\Gamma_{(\cD)} \le \eps\inv(\Sigma),
$$
whence
$$
\eps(\Gamma_{(\cD)}) \le \Sigma,
$$
and the former subgroup is the pointwise stabilizer
of a subset of $F/V(F)$ of cardinality $< \rank(F)$ of $F/V(F).$
\end{proof}

The problem whether a free group $F$ of arbitrary infinite
rank has the small index property seems to be very intriguing
(as we remarked in the Introduction, the answer is affirmative
in the case when $F$ is countable \cite{BrEv}). The following partial
answer is provided by Theorem \ref{SmIndProp4Nilps}.

\begin{Cor}
Let $F$ be a free group of infinite rank $\vk.$
Suppose that $\Sigma$ is a subgroup of $\Gamma=\aut F$ of index at most $\vk$
such that
$$
\Sigma= \bigcap_{c \ge 2} \Sigma I_c
$$
where $I_c$ is the kernel of the natural homomorphism
$\aut F \to \aut{F/\gamma_c(F)}$ determined by
the natural homomorphism $F \to F/\gamma_c(F)$
$(c \ge 2).$ Then $\Sigma$ contains the pointwise
stabilizer $\Gamma_{(\cD)}$ of a subset $\cD$
of $F$ of cardinality $< \vk.$
\end{Cor}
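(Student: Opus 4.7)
The plan is to reduce the problem to Theorem \ref{SmIndProp4Nilps} applied to each free nilpotent quotient $F/\gamma_c(F)$ and then glue the resulting pointwise stabilizers via the hypothesis $\Sigma=\bigcap_{c\ge 2}\Sigma I_c.$ Let $\pi_c\colon\Gamma=\aut F\to\aut{F/\gamma_c(F)}$ denote the natural homomorphism, which (as recalled in the previous corollary, via the main result of \cite{BrMa}) is surjective with kernel $I_c,$ so that $\aut{F/\gamma_c(F)}\cong\Gamma/I_c.$

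Fix $c\ge 2.$ The image $\pi_c(\Sigma I_c)=\pi_c(\Sigma)\cong \Sigma I_c/I_c$ is a subgroup of $\aut{F/\gamma_c(F)}$ of index at most $[\Gamma:\Sigma I_c]\le[\Gamma:\Sigma]\le\vk.$ Since $F/\gamma_c(F)$ is an infinitely generated free nilpotent group of rank $\vk$ belonging to $\mathfrak N_{c-1},$ Theorem \ref{SmIndProp4Nilps} yields a subset $\cE_c\sle F/\gamma_c(F)$ with $|\cE_c|<\vk$ whose pointwise stabilizer in $\aut{F/\gamma_c(F)}$ is contained in $\pi_c(\Sigma).$ Choose any lift $\cD_c\sle F$ of $\cE_c$ with $|\cD_c|=|\cE_c|<\vk.$ If $\sigma\in\Gamma_{(\cD_c)},$ then $\pi_c(\sigma)$ fixes $\cE_c$ pointwise, so $\pi_c(\sigma)\in\pi_c(\Sigma)$ and therefore $\sigma\in\Sigma I_c.$ Consequently $\Gamma_{(\cD_c)}\le\Sigma I_c$ for every $c\ge 2.$

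Set $\cD=\bigcup_{c\ge 2}\cD_c.$ Then $\Gamma_{(\cD)}\le\Gamma_{(\cD_c)}\le\Sigma I_c$ for all $c\ge 2,$ and the hypothesis $\Sigma=\bigcap_{c\ge 2}\Sigma I_c$ immediately gives $\Gamma_{(\cD)}\le\Sigma.$ The only point that requires attention is the cardinality bound $|\cD|<\vk.$ This is automatic whenever $\vk$ has uncountable cofinality (in particular whenever $\vk$ is regular uncountable), because $\cD$ is then a countable union of sets of cardinality $<\vk.$ The case $\vk=\aleph_0$ is already subsumed by the theorem of Bryant and Evans in \cite{BrEv}, since $[\Gamma:\Sigma]\le\aleph_0<2^{\aleph_0}$ triggers the classical small index property and provides a finite $\cD$ directly. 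The main anticipated obstacle is thus the singular case of countable cofinality, where one may need to prune the $\cD_c$ inductively (adding at each step only genuinely new elements needed for $\pi_c$) to keep the union strictly below $\vk;$ but the core idea, namely cascading Theorem \ref{SmIndProp4Nilps} along the lower central series and reassembling via the intersection hypothesis, is otherwise entirely routine.
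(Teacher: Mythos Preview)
Your argument has a genuine gap in the case where $\vk$ is an uncountable cardinal of countable cofinality (e.g.\ $\vk=\aleph_\omega$). You obtain, for each $c\ge 2$, a set $\cD_c$ of cardinality $<\vk$, and then set $\cD=\bigcup_{c\ge 2}\cD_c$. Nothing in your construction bounds the cardinalities $|\cD_c|$ uniformly below some fixed $\mu<\vk$, so the countable union can well have cardinality $\vk$. Your suggestion to ``prune the $\cD_c$ inductively'' does not work as stated: Theorem~\ref{SmIndProp4Nilps} (applied as a bare small index statement) gives you \emph{some} $\cE_c\subseteq F/\gamma_c(F)$ with the required property, but no control over its size beyond $<\vk$ and no compatibility with previously chosen sets. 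There is no evident mechanism for forcing the $\cD_c$ to stay inside a fixed small set.

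The paper sidesteps this problem entirely by reversing the order of operations. Rather than applying the small index property in each quotient $F/\gamma_c(F)$ and then assembling, it first applies Proposition~\ref{MStab-in-a-small-index-subgroup} once, in $\Gamma=\aut F$ itself, to produce a \emph{single} subset $\cD$ of a basis $\cB$ of $F$ with $|\cD|<\vk$ such that $\mathrm{Mo}_\cB(F)_{(\cD)}\subseteq\Sigma$. This $\cD$ is fixed once and for all, independent of $c$. Pushing forward along $\eps\colon F\to N_c=F/\gamma_c(F)$ gives $\mathrm{Mo}_{\eps\cB}(N_c)_{(\eps\cD)}\le\eps(\Sigma)$, and now the \emph{generation} statement in Theorem~\ref{SmIndProp4Nilps} (namely $\Gamma^{(c)}_{(\eps\cD)}=\langle\mathrm{Mo}_{\eps\cB}(N_c)_{(\eps\cD)}\rangle$, not merely the small index conclusion) yields $\Gamma^{(c)}_{(\eps\cD)}\le\eps(\Sigma)$, hence $\Gamma_{(\cD)}\le\Sigma I_c$ for every $c$. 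The intersection hypothesis then finishes the proof with no cardinality issue whatsoever. The moral is that Theorem~\ref{SmIndProp4Nilps} is being used here in its sharper form (Proposition~\ref{Stab=<Shariki>}-style generation by moietous automorphisms), and that is precisely what lets one work with a single $\cD$.
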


\begin{proof}
Let a natural number $c \ge 2$ be arbitrary. Write $\eps$
for the natural homomorphism $F \to N_c=F/\gamma_c(F).$ It is
convenient to use the same symbol $\eps$ for the
natural homomorphism $\aut F \to \Gamma^{(c)}=\aut{F/\gamma_c(F)}.$
Take a basis $\cB$ of $F.$ As $\Sigma$ is a subgroup of index at most $\vk,$
by Proposition \ref{MStab-in-a-small-index-subgroup}
there is a subset $\cD$ of $\cB$ of cardinality $< \vk$ such that
$$
\mathrm{Mo}_\cB(F)_{(\cD)} \sle \Sigma,
$$
and hence
$$
\eps(\mathrm{Mo}_\cB(F)_{(\cD)}) = \mathrm{Mo}_{\eps \cB}(N_c)_{(\eps \cD)} \le \eps(\Sigma).
$$
Now $\eps \cB$ is a basis of $N_c,$ and then by Theorem \ref{SmIndProp4Nilps},
$$
\Gamma^{(c)}_{(\eps \cD)}=\str{\mathrm{Mo}_{\eps \cB}(N_c)_{(\eps \cD)}} \le \eps(\Sigma).
$$
Lifting all objects that participate in the last inequality back
to $\Gamma,$ we get that
$$
\Gamma_{(\cD)} I_c \le \Sigma I_c \To \Gamma_{(\cD)} \le \Sigma I_c
$$
for all $c \ge 2.$ It follows that
$$
\Gamma_{(\cD)} \le \bigcap_{c \ge 2} \Sigma I_c = \Sigma,
$$
as claimed.
\end{proof}

\begin{rem}
\rm Subgroups $\Sigma$ of $\Gamma=\aut F$ that satisfy the condition
$\Sigma=\bigcap_{c \ge 2} \Sigma I_c$ can be characterized
as {\it closed} subgroups of $\Gamma$ with respect to the topology $\tau$
on $\Gamma$ determined by the filtration $(I_c : c \ge 2).$ Thus
having an arbitrary subgroup $\Sigma$ of $\Gamma$ of index
at most $\vk$ we could only claim that the closure $\avst \Sigma$
of $\Sigma$ in $\tau$ contains the pointwise stabilizer of a subset of
$F$ of cardinality $< \vk.$
\end{rem}

Using the fact that any variety of vector spaces over a fixed division
ring is a BMN-variety \cite{To_Berg}, we can easily adapt the `abelian' part of the proof
of Theorem \theTh\ to prove the following result.

\begin{Prop}
Let $V$ be an infinite-dimensional vector space
over a division ring. Then $V$ has the small
index property.
\end{Prop}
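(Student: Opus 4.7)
The plan is to verify the hypothesis of Proposition \ref{Stab=<Shariki>}: for any basis $\cB$ of $V$ and any subset $\cD \sle \cB$ with $|\cD| < \vk = \dim V$, one has $\Gamma_{(\cD)} = \str{\mathrm{Mo}_\cB(V)_{(\cD)}}$, where $\Gamma = \aut V$. Since any variety of vector spaces over a fixed division ring is a BMN-variety, Proposition \ref{Stab=<Shariki>} will then yield the small index property at once. The argument I propose is a close transcription of the base case ($c=1$) of the proof of Theorem \ref{SmIndProp4Nilps}.

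Set $\Phi = \str{\mathrm{Mo}_\cB(V)_{(\cD)}}$ and fix $\s \in \Gamma_{(\cD)}$. Write $\cE = \cB \setminus \cD = \{e_i : i \in I\}$ with $|I| = \vk$. Using the direct sum decompositions $V = \str{\cD} \oplus \str{\cE} = \str{\cD} \oplus \str{\s\cE}$, I would produce a basis $\{e_i' : i \in I\}$ of $\str{\cE}$ and vectors $u_i \in \str{\cD}$ with $\s e_i = e_i' + u_i$ for every $i$.

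Next, I would show that the automorphism $\gamma_0 \in \Gamma_{(\cD)}$ sending $e_i \mapsto e_i'$ lies in $\Phi$. Its restriction to $\str{\cE}$ is an automorphism of an infinite-dimensional vector space, and the BMN property applied to $(\str{\cE},\cE)$ writes that restriction as a product of $\cE$-moietous automorphisms; each factor extends by the identity on $\str{\cD}$ to a $\cB$-moietous element of $\Gamma_{(\cD)}$, so $\gamma_0 \in \Phi$. Replacing $\s$ by $\gamma_0\inv \s$, I may thus assume that $\s e_i = e_i + u_i$ for all $i \in I$. Now I would partition $I = I_1 \sqcup I_2$ into moieties and define $\pi_1,\pi_2 \in \Gamma_{(\cD)}$ by $\pi_k e_i = e_i + u_i$ if $i \in I_k$ and $\pi_k e_i = e_i$ otherwise. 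With respect to the partition $\cB = \{e_i : i \in I_{3-k}\} \sqcup (\cD \cup \{e_i : i \in I_k\})$ of $\cB$ into two moieties, $\pi_k$ fixes the first part pointwise and preserves the span of the second part, so it belongs to $\mathrm{Mo}_\cB(V)_{(\cD)} \sle \Phi$. Since the $u_i$ lie in $\str{\cD}$ and are therefore fixed by both $\pi_1$ and $\pi_2$, a one-line computation gives $\pi_1\pi_2 e_i = e_i + u_i = \s e_i$ for every $i \in I$, whence $\s = \pi_1 \pi_2 \in \Phi$.

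The only point needing care — and the one I would flag as the main obstacle — is the moietousness of $\pi_1$ and $\pi_2$: one must be able to place $\cD$ inside the part whose span is preserved rather than in the pointwise-fixed part. This is possible precisely because $|\cD| < \vk$ forces $\cD \cup \{e_i : i \in I_k\}$ to remain a moiety of $\cB$. Everything else is linear algebra, and no analogue of the lifting step from \cite{BrMa} is needed because vector spaces carry no commutator-type obstruction.
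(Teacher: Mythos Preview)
Your proposal is correct and follows essentially the same route the paper indicates: verify the hypothesis of Proposition~\ref{Stab=<Shariki>} by transcribing the abelian base case of Theorem~\ref{SmIndProp4Nilps}, replacing the BMN property of abelian groups by that of vector spaces. Your explicit justification that $\pi_1,\pi_2$ are $\cB$-moietous (placing $\cD$ in the preserved-span part, which remains a moiety since $|\cD|<\vk$) and that the $\cE$-moietous factors of $\gamma_0|_{\str{\cE}}$ extend to $\cB$-moietous elements of $\Gamma_{(\cD)}$ makes precise exactly what the paper leaves implicit.
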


A similar result can be in fact proven for
free modules of infinite rank over rings from
a rather large class of rings the author considered in \cite{To_Berg}
(see Remark 2.3 and Theorem 2.4 of \cite{To_Berg}
for the details).

Theorem \ref{SmIndProp4Nilps} can be also used to show
that all automorphisms of the automorphism group
$\aut A$ of an infinitely generated free abelian group $A$
are inner. The crucial step in the proof is the
reconstruction of the family of all unimodular
elements of $A$ in $\aut A$ with the use of
the small index property.

\subsection*{Acknowledgements}
This work has been partially supported
by the Russian Federal Target Program ``Scientific, Academic and Teaching Staff of Innovative Russia''.

\end{document}